\newtheorem{theorem}{Theorem}[section]
\newtheorem{property}[theorem]{Property}
\newcommand{\no}{\noindent}
\newcommand{\su}{\textsuperscript}
\begin{document}
\title{Reversibility Checking for Markov Chains}
\author{Q. Jiang\su{a}, M. Hlynka\su{a},  P.H. Brill\su{a,b} and C.H. Cheung\su{a}\\
a. Department of Mathematics and Statistics\\
b. Department of Management Science\\
University of Windsor\\
Windsor, Ontario, Canada N9B 3P4\\
Corresponding Author: hlynka@uwindsor.ca}
\date{June 1, 2018}
\maketitle
\no
Keywords: Reversible Markov chain, detailed balance equations, Kolmogorov criterion  \\
Mathematics Subject Classification: 60J10, 60J22\\

\begin{abstract}
In this paper, we present reversibility preserving operations on Markov chain transition matrices. Simple row and column operations allow us to create new reversible transition matrices and yield  an easy  method for checking a Markov chain for reversibility.   
\end{abstract}

\newpage
\section{Introduction}
\large
Reversible Markov chains show up in many diverse areas. For example, they occur in MCMC (Markov Chain Monte Carlo) analyses (see \cite{Ald} Aldous and Fill, 2001). They have geological applications as in \cite{Ric} Richman and Sharp, 1991. They have applications in genetics models and queueing networks (see \cite{Kel} Kelly, 1978). McCullagh \cite{McC} (1982) and Sharp and Markham \cite{Sha} (2000) look at quasi symmetry and reversibility. More recent work has been done by  Pistone and Rogantin \cite {Pis} (2013). \\

\subsection{Notation}
We use standard Markov chain notation, as in \cite{Dur} Durrett, 2012.\\ 
  Let $P=[p_{ij}]$ be the probability transition matrix for an ergodic Markov chain $X(t), t=0,1,2,\dots$ with states 
$\{1,2,\dots,n\}$ where $p_{ij}=P(X(t)=j|X(t-1)=i)$ for $t=1,2\dots$. 
 Let $P^{(k)}=[p_{ij}^{(k)}]$ be the $k$ step transition matrix.  For an ergodic Markov chain, the limiting probability and stationary row vector $\vec{\pi}=(\pi_1,\pi_2,\dots)$ with 
$\pi_j = \lim_{k \rightarrow \infty} p_{ij}^{(k)}$
   exists and is independent of $i$. The limiting vector $\vec{\pi}$ can be found by solving the balance equations
   \begin{align}
      \vec{\pi} &= \vec{\pi} P,   \end{align}
   subject to $\displaystyle \sum_{j = 1}^n \pi_j = 1$. 

\subsection{Reversible Process}

From Kelly \cite{Kel}, (1978), an ergodic Markov chain ${X(t)}$ on state space $S$ is reversible if ($X(0), X(1), \dots, X(t)$) has the 
   same distribution as $(X(t), X(t-1), \dots, X(0))$ for all $t$. Given $P$ and $\underline{\pi}$, the chain $X(t)$ is reversible iff for all $i,j$, it satisfies the detailed balance equations
   \begin{align}
      \pi_ip_{ij} = \pi_jp_{ji}, \label{dbe}
   \end{align}

\subsection{Kolmogorov's Check for Reversibility}

   It may be desirable to verify reversibility before solving for the stationary vector $\vec{\pi}$, since reversibility allows a simple method for finding $\vec{\pi}$. We can also be interested in reversibility for other reasons. We refer to a transition matrix as being reversible if the corresponding Markov chain is reversible.
 
   One method is to use Kolmogorov's loop criterion  (see \cite{Kel} Kelly, 1978, Chapter 1). An ergodic Markov chain is reversible if and only if    
\begin{align}
      p_{j_0j_1} \ p_{j_1j_2} \ \dots \ p_{j_{k-1}j_k} \ p_{j_kj_0} = p_{j_0j_k} \ p_{j_kj_{k-1}} \ \dots \ p_{j_2j_1} \ p_{j_1,j_0}, \label{kl}
   \end{align}
   for every finite sequence of distinct states $j_0, j_1, j_2, \dots, j_k $ 

   Some matrices can be easily checked for reversibility by Kolmogorov's loop criterion.  For a two-state Markov chain $X(t)$, Kolmogorov's criterion is always satisfied since    $ p_{12}p_{21} = p_{12}p_{21}$. Also, if the transition matrix $P$ is symmetric, then $p_{ij} = p_{ji}$ for all $i,j$, so Kolmogorov's criterion is satisfied and the chain is reversible. Usually loop checking involves much computational work. 

In words, Kolmogorov's loop criterion says that a Markov transition matrix is reversible iff for every loop of distinct states, the forward loop probability product equals the backward loop probability product.  

One difficulty with Kolmogorov's method is that the number of loops that need to be checked grows very quickly with $n$ where $n$ is the number of states. We first present a result about the number of equations that must be checked in
order to apply Kolmogorov's criterion. 

Kelly \cite{Kel} notes (Exercise 1.5.2) that if there is a state which can be accessed from every other state in exactly 1 step (i.e. a column of the transition matrix with no zero entries), then it is sufficient to check loops of only three states. However, it is possible that no such state exists.
 
Another technique to check for reversibility is presented in  Richman and Sharp, 1991(\cite{Ric}). In their paper, they basically suggest premultiplying the probability transition matrix $P$ by a diagonal matrix $D$ formed by ratios of the entries in a particular nonzero row and its corresponding column. One difficulty with their method is that there may not exist such a nonzero row, further their result is stated in terms of tally matrices rather than probability transition matrices.  

In this paper, we present a new method (that allows zero entries to appear).  This new method is convenient and uses only traditional matrix row and column operations. 

\section {Counting Kolmogorov Loops}

The work in this section appears in Jiang \cite{Jia} (2011)
 \begin{property}
   For an $n$ state Markov chain, with $n\geq 3$, the number of equations that must be checked for reversibility by Kolmogorov's method is 
   \begin{align}
      \displaystyle \sum_{i=3}^n \binom{n}{i} \frac{(i-1)!}{2}.
   \end{align}
   \end{property}
   \textbf{Proof:}
     For a three-state Markov chain, we note that only one equation
   \begin{align}
      \nonumber p_{12}p_{23}p_{31} &= p_{13}p_{32}p_{21}
   \end{align}
is needed since no length 2 loops need to be checked and any other length 3 loop with the same states results in the same equation. For $n = 4$, we must check each loop of 3 states and each loop of 4 states. For three state loops paths, we choose any 3 out of 4 states and there is only one equation for each. For the four-state loops, we fix the starting state. Then there are $3!$ orders  for the other states. However, since the other side of equation is just the reversed path, there are only $\frac{3!}{2}$ paths involving four states, with the first state fixed. In total, we need to check 
   \begin{align}
      \binom{4}{3} + \binom{4}{4}\frac{(4-1)!}{2} = 7
   \end{align}
equations.  A similar argument holds for larger values of $n$. \qed \\

\begin{table*}[h]
	\centering 
		\begin{tabular}{|r|r|r|r|} \hline 
			 $n$ & Number of equations & $n$ & Number of equations  \\ \hline
			1 & 0 & 6 & 197\\
			2 & 0 & 7 & 1172\\
			3 & 1 & 8 & 8018\\
			4 & 7 & 9 & 62814\\
			5 & 37 & 10 & 556014 \\ \hline
  	\end{tabular} 
	\caption{Number of equations to be checked for $n$ state system}
	\label{table1}
\end{table*}
 Thus the number of equations that most be checked via Kolmogorov's method grows rapidly with $n$ (the number of states) and makes Kolmorogov's criterion computationally difficult for even moderate values of $n$. (See Online Encyclopedia of Integer Sequences, oeis A002807)

\section {Reversibility preserving matrix operations} 

 We next present a result to transform transition matrices in such a way as to preserve their reversibility status (either reversible or non-reversible). These transformations will be useful in creating new reversible Markov chains from existing ones, and for checking reversibility of Markov chains. 

We introduce a row multiplication operation on row $i$ of a Markov transition matrix as the 
multiplication of row $i$ by a positive constant that leaves the sum of the non diagonal elements at most 1, followed by an adjustment to $p_{ii}$ to  make the row sum exactly to 1.\\

We introduce a column multiplication operation on column $j$ of a Markov transition matrix as the 
multiplication of column $j$ by a positive constant of allowable size (so no row sums exceed 1) followed by adjustments to all diagonal entires to make every row sum exactly 1.\\

\begin{theorem} \label{thm1}
 A  Markov chain matrix $P$ maintains its reversibility status after a row multiplication operation or a column multiplication operation.    
\end{theorem}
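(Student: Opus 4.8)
The plan is to argue directly from Kolmogorov's loop criterion~\eqref{kl}, exploiting the fact that it refers only to the off-diagonal entries of $P$: a loop runs through \emph{distinct} states $j_0,j_1,\dots,j_m$, so no diagonal entry $p_{ii}$ ever appears in a loop product. Both operations in the statement change only (i) off-diagonal entries that all lie in one row (a row operation) or one column (a column operation), each scaled by the same positive constant $c$, together with (ii) diagonal entries, which the criterion ignores. So it suffices to show that, around any closed loop of distinct states, the forward product and the backward product pick up exactly the same factor when we pass to the transformed matrix $P'$; then \eqref{kl} holds for $P$ if and only if it holds for $P'$, and — since $P$ and $P'$ have the same off-diagonal zero pattern and hence the same irreducibility — $P$ is reversible iff $P'$ is.

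For a row multiplication on row $i$, I would write $P'$ explicitly: $p'_{ik}=c\,p_{ik}$ for $k\ne i$, the entry $p'_{ii}$ re-chosen so that row $i$ sums to $1$, and all other entries unchanged. Fix a loop $j_0,\dots,j_m$ with cyclic products $F=p_{j_0j_1}\cdots p_{j_mj_0}$ and $B=p_{j_0j_m}\cdots p_{j_1j_0}$. The key observation is a counting one: among the $m+1$ factors of $F$, each of the $m+1$ distinct states occurs exactly once as a ``from'' index, so $F$ contains \emph{at most one} factor with first index $i$ — none if $i\notin\{j_0,\dots,j_m\}$, exactly one otherwise. The same holds for $B$. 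Hence both $F$ and $B$ are multiplied by the identical factor $c^{\,[\,i\in\{j_0,\dots,j_m\}\,]}$, so the equation $F=B$ is preserved.

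The column case I would treat the same way. A column multiplication on column $j$ leaves every off-diagonal entry fixed except $p'_{kj}=c\,p_{kj}$ for $k\ne j$, the diagonal entries absorbing the compensating adjustments needed to restore row sums to $1$. Now the relevant count is that each state occurs exactly once as a ``to'' index among the $m+1$ factors of $F$ (and of $B$), so again $F$ and $B$ are rescaled by the same power of $c$ and \eqref{kl} survives.

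The proof is largely bookkeeping, so the main obstacle — such as it is — is getting two points exactly right: (a) verifying that diagonal entries genuinely never enter a Kolmogorov loop product, which is precisely what makes re-adjusting $p_{ii}$ (or the diagonal entries in the column case) harmless; and (b) the multiplicity count of step~2, namely that in a loop through distinct states every state appears once as a ``from'' index and once as a ``to'' index, which is what forces the factor $c$ to occur to the same power on both sides of \eqref{kl}. A minor additional point is that $c>0$ of ``allowable size'' keeps $P'$ a bona fide ergodic transition matrix (the off-diagonal zero pattern, hence irreducibility, is unchanged), so Kolmogorov's criterion legitimately applies to $P'$; alternatively, one can sidestep ergodicity by working with the detailed balance equations~\eqref{dbe} directly, taking the stationary vector of $P'$ to equal $\vec{\pi}$ except at the single index rescaled by the operation.
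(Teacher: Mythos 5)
Your proof is correct and follows essentially the same route as the paper: both argue via Kolmogorov's loop criterion that a row (or column) multiplication scales the forward and backward loop products by the identical power of $c$, since a loop of distinct states contains each state at most once as a ``from'' (resp.\ ``to'') index and never touches a diagonal entry. Your version is somewhat more explicit about the multiplicity count and about why the diagonal adjustment is harmless, but the underlying idea is the same.
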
 
\begin{proof} 
Let the $i$th row of $P$ correspond to state $i$.  The Kolmogorov criterion states that $P$ is reversible iff for all loops of distinct states, the forward loop probability product equals the backward loop probability product.  If a loop does include state $i$, then a multiplication row operation on row $i$ has no effect on the forward and backward loop products. Otherwise, note that state $i$ appears in the first subscript of a forward loop probability iff it appears in the first subscript of a backward loop probability. So the row operation will have an identical effect on both sides of the loop product. A similar conclusion holds for column product operations. 
\end{proof}

\noindent
Note: If we let  $P^*$  be the matrix resulting from a row (or column) multiplication operation on $P$, then the limiting probabilities for $P^*$ in the above theorem are generally different than the limiting probabilities for $P$.\\

\noindent 
\begin{property}
 If the zeros in an ergodic Markov transition matrix are not symmetric, then the matrix is not reversible. 
 \end{property}
 \begin{proof}
 This follows by noting that the detailed balance equations fail for the nonsymmetric zero states, since the limiting probabilities all all entries of an ergodic chain are nonzero.  
 \end{proof}
 
\noindent
ALGORITHM: \\
If the matrix $P$ is an $n\times n$ probability transition matrix, then a sequence of at most $n-1$ row or column multiplication operations will be sufficient to determine whether or not $P$ is reversible or not. \\
(1) Pick two nonzero symmetric positions in P, say $p_{i1,i2}$ and $p_{i2,i1}$. Let $S_1=\{i1,i2\}$. If $p_{i1,i2}=p_{i2,i1}$, move to the next step. Otherwise, assume $p_{i1,i2}<p_{i2,i1}$. Multiply row $i2$ by $p_{i1,i2}/p_{i2,i1}$ and adjust $p_{i2,i2}$ to make the row sum to 1.
If $p_{i1,i2}>p_{i2,i1}$, multiply column $i2$ by $p_{i2,i1}/p_{i1,i2}$ and adjust all diagonal entries so that the rows sum to 1.\\ The new matrix $P^*$ will now have $p^*_{i1,i2}=p^*_{i2,i1}$.\\
(2) Choose another state $i3$ which has nonzero transition probabilities with a state in $S_1$. Make the appropriate row or column multiplication operation on row or column $i3$. 
Set $S_2=\{i1,i2,i3\}$.\\
(3) Repeat step 2 with a new state until there are no states left to add. After $n-1$ steps we have $S_n=\{i_1,\dots,i_n\}$. Let $P^*$ be the final matrix.

\begin{theorem} Let $P$ be an $n\times n$ transition matrix to which the Algorithm is applied, resulting in $P^{*}$. Then $P$ is reversible iff $P^{*}$ is symmetric. 
\end{theorem}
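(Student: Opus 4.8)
The plan is to reduce the claim to Theorem~\ref{thm1} together with a careful reading of what the Algorithm produces. First note that a row (resp.\ column) multiplication operation multiplies an entire row (resp.\ column) by a \emph{positive} constant and then readjusts diagonal entries, so the pattern of zero off-diagonal entries is unchanged and every row still sums to $1$ with nonnegative entries; hence $P^{*}$ is again an irreducible stochastic matrix, with its own unique stationary vector $\vec{\pi}^{*}$, to which the detailed balance equations~\eqref{dbe} and Kolmogorov's criterion~\eqref{kl} apply. (If the zeros of $P$ are not symmetric then $P$ is already known not to be reversible; so we may assume they are symmetric, in which case ergodicity of $P$ guarantees that at every stage a new state having a nonzero two-way transition to the current set exists, and the Algorithm runs to completion.) Applying Theorem~\ref{thm1} to each of the at most $n-1$ operations in turn, $P$ is reversible if and only if $P^{*}$ is; one direction of the theorem is then immediate, since a symmetric $P^{*}$ satisfies~\eqref{kl} trivially and is therefore reversible, whence $P$ is reversible.

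For the converse I would first record the structural effect of the Algorithm. Each time a new state $v$ is adjoined to the current set $S$, it is chosen so that $p_{vw}$ and $p_{wv}$ are both nonzero for some $w\in S$, and the prescribed row-on-$v$ or column-on-$v$ operation (or no operation, if $p_{vw}$ and $p_{wv}$ are already equal) makes $p^{*}_{vw}=p^{*}_{wv}$. The key bookkeeping observation is that this step alters \emph{only} the entries of row $v$, or only the entries of column $v$ together with the diagonal; since $v$ was not in $S$ before this step, and every state adjoined later differs from $v$ and from $w$, no subsequent operation touches $p_{vw}$ or $p_{wv}$, and likewise no operation ever touches an off-diagonal entry joining two states both present at an earlier stage. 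Therefore, in the final matrix $P^{*}$, the $n-1$ pairs $\{v,w\}$ produced over the run of the Algorithm are \emph{simultaneously} symmetric, and since each new $v$ attaches to a state already in $S$, these pairs form a spanning tree $T$ on $\{1,\dots,n\}$.

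Now suppose $P$, and hence $P^{*}$, is reversible, so $\pi^{*}_i p^{*}_{ij}=\pi^{*}_j p^{*}_{ji}$ for all $i,j$. For every tree edge $\{v,w\}$ we have $p^{*}_{vw}=p^{*}_{wv}\neq 0$, so detailed balance forces $\pi^{*}_v=\pi^{*}_w$; as $T$ is connected and spans all states, $\vec{\pi}^{*}$ is constant, i.e.\ $\pi^{*}_i=1/n$ for all $i$. Substituting back into $\pi^{*}_i p^{*}_{ij}=\pi^{*}_j p^{*}_{ji}$ yields $p^{*}_{ij}=p^{*}_{ji}$ for all $i,j$, so $P^{*}$ is symmetric. (Alternatively one can avoid $\vec{\pi}^{*}$ entirely: for arbitrary $i,j$ form the loop consisting of the edge $j\to i$ followed by the unique $T$-path from $i$ to $j$, apply~\eqref{kl}, and cancel the two tree-path products, which are equal because every tree edge is symmetric and are nonzero; the case $p^{*}_{ij}=0$ is handled by symmetry of the zero pattern.)

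The step I expect to be the main obstacle is the bookkeeping in the second paragraph: making fully precise that an operation performed when $v$ is adjoined cannot disturb any symmetric entry established earlier (nor be disturbed by any later operation), and concluding that the symmetrized positions genuinely form a spanning tree rather than $n-1$ unrelated symmetric entries. Once that is settled, the rest is a direct appeal to Theorem~\ref{thm1} and a short argument with the detailed balance equations.
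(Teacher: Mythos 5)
Your proposal is correct and takes essentially the same route as the paper's own proof: both directions reduce to Theorem~\ref{thm1}, and the converse uses detailed balance at the symmetrized nonzero pairs to force a constant stationary vector, from which symmetry of $P^{*}$ follows. The only difference is one of care rather than substance: your explicit bookkeeping that later operations cannot disturb earlier symmetrized entries, and that the symmetrized pairs form a spanning tree, makes precise what the paper merely asserts when it says the subcollection of pairs ``will include each of the states $1,\dots,n$.''
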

\begin{proof}
If $P^*$ is symmetric, then by earlier comments, $P^*$ is reversible. By Theorem 3.1, $P$ is reversible. \\
Next assume that $P$ is reversible. Then by Theorem 3.1, $P^*$ is reversible. Let $\vec{\tau}=(\tau_1,\dots, \tau_n)$ be the stationary vector for $P^*$. Note that $P^*$ was formed so that
$p^*_{ij}=p^*_{ji}$ for particular subcollection of $(i,j)$ which will include each of the states $1,...,n$ somewhere among the $(i,j)$ pairs  
So let $(i,j)$ be part of the subcollection. Since  $P^*$ is reversible, we must have detailed balance so  
  $\tau_i p^*_{ij} = \tau_j p^*_{ji}$, for that particular $i,j$.  But $p^*_{ij}=p^*_{ji}$ so  $\tau_i = \tau_j $ for the particular pair $(i,j)$. But all the states $1,2,\dots,n$ appear somewhere in the subcollection so we conclude $\tau_1=\tau_2=\dots =\tau_n$. Now we take an arbitrary pair $(i,j)$. Since $P^*$ is reversible, we have detailed balance for all $i,j$. Hence
$\tau_i p^*_{ij} = \tau_j p^*_{ji}$ for all $i,j$ and since $\tau_i=\tau_j$, it follows that $ p^*_{ij} =  p^*_{ji}$  for all $i,j$ so $P^*$ is symmetric.  
\end{proof}

\noindent
Notes
(1) We observe that if we conclude that a Markov transition matrix is reversible (using the Algorithm), then detailed balance will greatly simplify the computation of stationary vector $\vec{\pi}$. \\
(2) The Algorithm choose the smaller of two matrix entries in order modify the matrix $P$. Also corrections were made to the diagonal elements to ensure that the rows sum to 1. In fact, this is not really necessary and the matrix $p^*$ no longer needs to be a transition matrix. The important issue is whether $P^*$ is symmetric or not. \\
(3) We may be able to conclude that $P$ is not a reversible matrix by looking for symmetry only in the upper left corner of $P^*$ while it is being formed. That can save considerable computation.\\
(4) Although our results are stated for transition matrices for a finite state space, the same Algorithm could be used to check infinite state spaces if there were a patterned matrix (as in quasi birth and death processes).

\begin{flushleft}
\textbf{Example} 
\end{flushleft}
Let 
   \begin{center}       
      $P = \left[
         \begin{array}{c c c c} 
        .425 & .000 & .075 &.500\\
         .000 & .550 & .250 &.200 \\
         .300 & .250 & .450 &.000\\
         .500 &.050 &.000 &.450        
         \end{array}
      \right]$,
   \end{center}
To check for reversibility, we transform $P$ by column or row operations. Note that the zeros of P are symmetric (i.e. $0=p_{12}=p_{21}$ and $0=p_{34}=p_{43}$). We also note that $p_{14}=p_{41}$ so that symmetry for states $S=\{1,4\}$ already exists. We next try make the (1,3) entry match the (3,1) entry. We do not want to lose our (1,4) and (4,1)  symmetry, so we could either multiply column 3 by .300/.075 or we could multiply row 3 by .075/.300. We make the latter choice. We get a new matrix with row 3 equal to $(.075,.0625,.1125,.000)$. This new matrix is not a transition matrix because row 3 no longer sums to 1. We simply change the diagonal entry (3,3)  to $1-.075-.0625=.8625$. This yields the matrix \\
\begin{center}
   $P^{(1)}=   \left[
         \begin{array}{c c c c} 
        .425 & .000 & .075 &.500\\
         .000 & .550 & .250 &.200 \\
         .075 &.0625 &.8625 &.000\\
         .500 &.050 &.000 &.450        
         \end{array}
             \right]$
\end{center}

Then $P$ is reversible iff $P^{(1)}$ is reversible. We now have $S=\{1,4,3\}$. We next need to include state 2 in our computations. We need to preserve our values in entries (1,4) and (4,1), (1,3) and (3,1), so we can only change row 2 or column 2. We choose to multiply column 2 by 4 to make
entries (4,2) and (2,4) equal. Our result is\\
\begin{center}
   $P^{(2)}=   \left[
         \begin{array}{c c c c} 
        .425 & .000 & .075 &.500\\
         .000 & 2.2 & .250 &.200 \\
         .075 &.250 &.8625 &.000\\
         .500 &.200 &.000 &.450        
         \end{array}
             \right]$
\end{center}
Again, the rows do not sum to 1 so we change the diagonal entries of rows 2,3,4 to fix this. The result is \\
\begin{center}
   $P^{(3)}=   \left[
         \begin{array}{c c c c} 
        .425 & .000 & .075 &.500\\
         .000 & .550 & .250 &.200 \\
         .075 &.250 &.675 &.000\\
         .500 &.050 &.000 &.300        
         \end{array}
             \right]$
\end{center}

Now $P$ is reversible iff $P^{(3)}$ is reversible. But $P^{(3)}$ is symmetric so we know that it is automatically reversible. Thus, by Theorem 3.3,  $P$ is reversible. 
Since $P$ is reversible,  using detailed balance, we have
$\pi_1p_{14}=\pi_4p_{41}$ and $\pi_1p_{13}=\pi_3p_{31}$ and $\pi_3p_{32}=\pi_2p_{23}$ so $\pi_1(.5)=\pi_4(.5)$ and $\pi_1(.075)=\pi_3(.3)$ and $\pi_3(.25)=\pi_{2}(.25)$. Hence $\pi_2=\pi_3=3\pi_1=3\pi_4$. Since the sum of the probabilities is 1, we have $\pi_1=\pi_4=3/8$ and $\pi_2=\pi_3=1/8$. 

\section{Conclusion}
The procedure we present in this paper is based on Kolmogorov's loop criterion and  the detailed balance equations.  Our procedure to check for reversibility differs from the traditional approach in that we are using a transformed matrix $P^*$ obtained the initial transition matrix $P$. The advantage of the new approach is it only requires at most $k - 1$ elementary row or column operations to obtain the matrix $P^*$, and hence determine the reversibility status.  If we begin with an $n\times n$ transition  matrix with each entry equal to $1/n$ (and hence clearly symmetric), we can use the row and column operations to create a large variety of nonsymmetric transition matrices which are reversible.  
   
\section*{Acknowledgments}

This research was funded through a grant from NSERC (Natural
Sciences and Engineering Research Council of Canada).

\end{document}